\documentclass{amsart}
\usepackage{amssymb,euscript,amsmath, mathrsfs, amscd}
\usepackage[pdftex]{graphicx}
\usepackage[pdftex]{color}

\newcounter{ENUM}

\newcommand{\margh}[1]{}

\input xy
\xyoption{all}
\CompileMatrices

\def\PP{{\mathbb P}}

\def\sL{{\mathscr L}}

\def\sO{{\mathscr O}}

\def\fg{{\mathfrak g}}

\def\Pic{\operatorname{Pic}}

\def\EH{\operatorname{EH}}

\newtheorem{thm}{Theorem}[section]
\newtheorem{prop}[thm]{Proposition}
\newtheorem{lem}[thm]{Lemma}
\newtheorem{cor}[thm]{Corollary}

\theoremstyle{definition}
\newtheorem{defn}[thm]{Definition}

\newtheorem{sit}[thm]{Situation}
\theoremstyle{remark}
\newtheorem{notn}[thm]{Notation}
\newtheorem{rem}[thm]{Remark}
\newtheorem{rems}[thm]{Remarks}

\numberwithin{equation}{section}
\numberwithin{figure}{section}

\begin{document}
\title{A simple characteristic-free proof of the Brill-Noether theorem}
\author{Brian Osserman}
\thanks{The author was partially supported by NSA grant H98230-11-1-0159
during the preparation of this work.}
\begin{abstract} We describe how the use of a different degeneration from 
that considered by Eisenbud and Harris leads to a simple and 
characteristic-independent proof of the Brill-Noether theorem using 
limit linear series. As suggested by the degeneration, we prove an
extended version of the theorem allowing for imposed ramification at up
to two points. Although experts in the field have long been aware of the
main ideas, we address some technical issues which arise in proving the
full version of theorem.
\end{abstract}
\maketitle

\section{Introduction}

The Brill-Noether theorem addresses the fundamental question of the
dimension and nonemptiness of spaces of linear series on smooth projective
curves. The original proof was characteristic independent, although 
Griffiths and Harris \cite{g-h1} wrote their portion of the argument
in the context of complex varieties. Gieseker's proof of
the Petri conjecture \cite{gi1}
was explicitly written as holding in all characteristics, and
gave a new proof of the Griffiths-Harris result.
The later degeneration proof
by Eisenbud and Harris using limit linear series
(Theorem 5.1 of \cite{e-h4}; see also \cite{e-h8}\footnote{The characteristic
$0$ hypothesis is used implicitly in Lemma 8.})
in fact used the characteristic-$0$ hypotheses in nontrivial
ways. This last proof is quite simple, so it is desirable to have
a characteristic-independent version. At the same time, in Theorem 4.5 of
\cite{e-h1}, Eisenbud and Harris generalized the Brill-Noether theorem to 
consider linear series with imposed ramification. Not only does their
argument use characteristic $0$, but as stated the result fails in positive
characteristic due to inseparability phenomena. 

The purpose of the present note is to give a simple proof of the most 
general form of the Eisenbud-Harris result which holds in all 
characteristics: the case that ramification is imposed at at most two 
points. As is known to the experts in the field, the dependence on 
characteristic in the earlier Eisenbud-Harris proof of the classical
Brill-Noether theorem can be largely avoided if one
changes the degeneration considered, working instead with a degeneration
to a chain of elliptic curves as considered by Welters in 
\cite{we4}. We describe this proof, and in the process observe that there
is a nontrivial technical obstacle to this approach which can be 
circumvented via the alternative construction of limit linear series
introduced in \cite{os8} (see Remark \ref{rem:technicality} for details). 
Our main result is the following:

\begin{thm}\label{thm:bn-plus-2} Let $C$ be a smooth, projective curve
of genus $g$ over an algebraically closed field $k$, 
and $P_1, P_2$ distinct points on $C$.  Given $r,d$, and
sequences $0 \leq \alpha^i_0 \leq \dots \leq \alpha^i_r \leq d-r$ for
$i=1,2$, set
$$\rho:=(r+1)(d-r)-rg-\sum_{i=1}^2 \sum_{j=0}^r \alpha^i_j.$$
Now, suppose that we have
\begin{equation}\label{eq:ram-ineq}
\sum_{j:\alpha^1_j+\alpha^2_{r-j}+r \geq d+1-g} \alpha^1_j+\alpha^2_j+r-d+g
\leq g.
\end{equation}
Then the space $G^r_d(C,P_1,P_2,\alpha^1,\alpha^2)$ 
parametrizing $\fg^r_d$s on $C$ with ramification sequence at least 
$\alpha^i$ at $P_i$ for $i=1,2$ is nonempty with every component of
dimension at least $\rho$.

Furthermore, if $(C,P_1,P_2)$ is a general $2$-marked curve, then in
fact $G^r_d(C,P_1,P_2,\alpha^1,\alpha^2)$ is nonempty if and only if
\eqref{eq:ram-ineq} is satisfied, and in this case is pure of dimension 
$\rho$.
\end{thm}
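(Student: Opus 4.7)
The plan is to degenerate the $2$-marked curve $(C,P_1,P_2)$ to a chain $C_0$ of $g$ elliptic curves $E_1,\dots,E_g$ meeting at nodes $q_1,\dots,q_{g-1}$, with $P_1$ a smooth point of $E_1$ and $P_2$ a smooth point of $E_g$, and to classify limit $\fg^r_d$s on $C_0$ using the refined moduli-theoretic construction of \cite{os8}. Standard deformation theory on a smooth curve already guarantees that every component of $G^r_d(C,P_1,P_2,\alpha^1,\alpha^2)$ has dimension at least $\rho$, so the work splits into two parts: first, exhibiting a limit $\fg^r_d$ on $C_0$ whenever \eqref{eq:ram-ineq} holds, so that properness of the family-level moduli space produces a nearby smoothing and hence nonemptiness of $G^r_d$ on the general fiber; and second, for a generic chain, showing that the scheme of limit series is empty when \eqref{eq:ram-ineq} fails and has dimension exactly $\rho$ when it holds, so that upper semicontinuity along the smoothing forces purity of dimension $\rho$ (or emptiness) on a general $2$-marked smooth curve.

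A refined limit $\fg^r_d$ restricts on each $E_i$ to a linear series with prescribed vanishing sequences $a^i_\bullet, b^i_\bullet$ at the two adjacent marked points, subject to the compatibility $b^i_j+a^{i+1}_{r-j}=d$ at each interior node and to the boundary ramification $\alpha^1,\alpha^2$ at $P_1\in E_1$ and $P_2\in E_g$. The core local fact is: on a generic elliptic curve $E$ with two chosen points $P,Q$, a $\fg^r_d$ with vanishing $\geq a_j$ at $P$ and $\geq b_{r-j}$ at $Q$ (for $0\leq j\leq r$) exists if and only if $a_j+b_{r-j}\leq d$ for every $j$, and each index with equality $a_j+b_{r-j}=d$ forces the line bundle to be the specific class $\cO_E(a_jP+b_{r-j}Q)$, contributing one independent codimension-$1$ condition coming from the group law on $E$. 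Propagating this across the chain and rewriting vanishing in ramification notation $a_j=j+\alpha_j$, one checks that the total codimension absorbed by the $g$ elliptic components coincides with the left-hand side of \eqref{eq:ram-ineq}; the chain admits a compatible system of vanishing sequences iff this total is at most $g$, and when it is, the resulting scheme of limit series has the expected dimension $\rho$.

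The principal obstacle, and the reason for invoking \cite{os8} rather than the classical Eisenbud--Harris theory, is the specialization step itself. On a chain of elliptic curves a classical refined limit linear series need not smooth to a $\fg^r_d$ on a nearby smooth fiber, and the classical moduli space does not represent a limit of $G^r_d$ in a family-theoretic sense, so one cannot transfer nonemptiness or dimension estimates across the specialization. The construction of \cite{os8} remedies this by producing a proper moduli space over the smoothing base whose generic fiber is $G^r_d$ and whose special fiber matches the refined count on $C_0$ (compare Remark \ref{rem:technicality}); semicontinuity of fiber dimension and nonemptiness then apply directly in both directions. Once this specialization is in place the combinatorial bookkeeping on the elliptic components is essentially routine, and it is manifestly characteristic-independent, since neither the group-law computation on a generic elliptic curve nor the dimension count on the chain invokes the characteristic of $k$.
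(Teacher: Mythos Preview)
Your approach is essentially the paper's: the paper proceeds by induction on the genus, attaching one elliptic tail at each step, but explicitly remarks that this is equivalent in spirit to degenerating all at once to a chain of $g$ elliptic curves, which is what you do. Two points in your sketch deserve sharpening, however.

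First, your account of why \cite{os8} is required has the direction reversed. In the classical Eisenbud--Harris theory, refined limit series \emph{do} smooth to $\fg^r_d$'s on nearby fibers (this is the regeneration theorem), so nonemptiness passes from special to generic fiber without difficulty. The genuine obstruction is the other way: in positive characteristic a $\fg^r_d$ on the generic fiber need not specialize, even after base change and blowup, to a \emph{refined} limit series, because inseparability can force extra ramification to accumulate at the node. Since it is precisely this specialization that one needs for the dimension \emph{upper} bound on the generic fiber, the classical theory is inadequate here; compare Remark~\ref{rem:technicality}.

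Second, the special fiber of the moduli space from \cite{os8} is not the Eisenbud--Harris space, nor does it ``match the refined count'' directly: it is a genuinely different scheme $G^r_d(X_0)$. The paper bridges the two via a comparison map $G^r_d(X_0)\to G^{r,\EH}_d(X_0)$ (Theorem~\ref{thm:grd-compare}) which is surjective with fiber dimension over a point in the stratum $G^{r,\EH}_d(X_0;\alpha^Y,\alpha^Z)$ bounded by $\sum_j(\alpha^Y_j+\alpha^Z_{r-j}-d+r)$. This excess exactly cancels the deficit in the stratum dimension, so each stratum of $G^r_d(X_0)$ has dimension at most $\rho$; your sketch elides this step, and without it the dimension bound on the proper special fiber does not follow from your elliptic-curve computation alone.
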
 

Recall the ramification terminology: if $(\sL,V)$ is a $\fg^r_d$ on $C$,
and $P \in C$, the ramification sequence $\alpha_0(P),\dots,\alpha_r(P)$
is determined by $\alpha_j(P)=a_j(P)-j$ for all $j$, where
$a_0(P),\dots,a_r(P)$ is the strictly increasing sequence of orders of
vanishing at $P$ of sections in $V$.

\begin{rems} 
(i) One easily verifies that \eqref{eq:ram-ineq} implies that
$\rho \geq 0$.
In addition, if the $\alpha^i_j$ are all equal to $0$, so
that we are in the classical case without imposed ramification, and if
$\rho \geq 0$, then \eqref{eq:ram-ineq} is satisfied.
Thus, we recover in particular the classical statement of
the Brill-Noether theorem.

(ii) Note that the theorem fails in positive characteristic for ramification
imposed at more than $2$ points, even in the case of $\fg^1_d$'s on $\PP^1$,
so the statement is sharp.

(iii) Although typically the dimensional upper bound for general curves is
viewed as deeper than the nonemptiness statement, in the context of the type
of degeneration argument we use, the opposite is true: one may prove the 
dimension statement without the nonemptiness statement, but not vice versa.

(iv) The restriction to algebraically closed base fields is a matter of
convenience, insofar as the $G^r_d$ spaces may be defined over an arbitrary
base field (or scheme), and commute with base change. However, although one
obtains a more general statement via this reduction, the notion of a general 
curve is less useful in the arithmetic setting, since there is no guarantee 
of having even one $k$-rational point inside a Zariski open subset of the 
moduli space.
\end{rems}

\begin{rem}\label{rem:reduce-to-special} We recall a standard reduction
step for proving the Brill-Noether theorem.
The dimensional lower bound is immediate from the construction of
the space $G^r_d(C,P_1,P_2,\alpha^1,\alpha^2)$, which realizes the
space as an intersection of relative Schubert cycles inside a relative
Grassmannian over $\Pic^d(C)$. Furthermore, this
construction works for families of curves to produce proper $G^r_d$
moduli spaces whose components always have relative dimension at least
$\rho$; it follows that to prove Theorem \ref{thm:bn-plus-2}, it suffices
to produce a single smooth curve $C$ (over any extension of the base
field) for which $G^r_d(C,P_1,P_2,\alpha^1,\alpha^2)$ is as asserted in
the theorem.
\end{rem}

We mention that the same degeneration we use can also be used to give
a simple characteristic-independent proof of the Gieseker-Petri theorem,
and without the need to use the limit linear series construction of
\cite{os8}. In a similar vein, while Lazarfeld's proof \cite{la3} of
the Gieseker-Petri theorem requires characteristic $0$, it seems likely
that his argument yields the classical Brill-Noether theorem in arbitrary
characteristic. Nonetheless, our proof of Theorem \ref{thm:bn-plus-2} has
several desirable characteristics: it gives an all-in-one approach to
both the dimensional statements and nonemptiness statements of the
Brill-Noether theorem; it avoids the additional machinery of the Petri
map; and, although restrictive, the generalization to ramification at
two points ought to be of independent interest. For instance, we can
conclude that spaces of Eisenbud-Harris limit linear series on chains
of curves made up of general marked curves have the expected dimension
in any characteristic; this is not true for arbitrary curves of compact
type.

\subsection*{Acknowledgements} I would like to thank Montserrat Teixidor
i Bigas and Gavril Farkas for helpful conversations.

\section{The base case}

Assuming the machinery of limit linear series, the most technical part
of the proof of Theorem \ref{thm:bn-plus-2} is in fact the base case,
which proceeds by direct case-by-case analysis as follows. 

\begin{lem}\label{lem:base-cases} Theorem \ref{thm:bn-plus-2} holds when
$g=0$ or $1$, and moreover in this case the generality hypothesis can be
described explicitly: for $g=0$, no additional conditions are required,
while for $g=1$, we may take any $C$, and it suffices to assume that $P_1$ 
does not differ from $P_2$ by $m$-torsion, with $m \leq d$.
\end{lem}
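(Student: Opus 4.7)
The proof splits by genus. For $g = 0$, I identify $G^r_d(\PP^1, P_1, P_2, \alpha^1, \alpha^2)$ as the intersection of two Schubert varieties in $\Gr(r+1, H^0(\PP^1, \cO(d)))$, one per marked point. Choosing homogeneous coordinates with $P_1 = [0:1]$ and $P_2 = [1:0]$, the monomial basis $\{X^i Y^{d-i}\}_{i=0}^d$ simultaneously diagonalizes the vanishing filtrations at both points ($X^i Y^{d-i}$ has order $i$ at $P_1$ and $d-i$ at $P_2$), so the two Schubert flags form an opposite pair and the intersection is a Richardson variety. Standard Schubert calculus then gives that this intersection is pure of expected dimension $\rho = (r+1)(d-r) - \sum_j (\alpha^1_j + \alpha^2_j)$ whenever it is nonempty, with nonemptiness governed by the classical compatibility $\alpha^1_j + \alpha^2_{r-j} + r \leq d$ for all $j$; a direct check shows this Schubert condition is equivalent to \eqref{eq:ram-ineq} specialized to $g = 0$.

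For $g = 1$, let $E$ be an elliptic curve and assume $d > 0$ (the $d = 0$ case is trivial). By Riemann--Roch, $h^0(\cL) = d$ for every $\cL \in \Pic^d(E)$, and I stratify $G^r_d(E, P_1, P_2, \alpha^1, \alpha^2)$ via its forgetful map to $\Pic^d(E) \cong E$. Each fiber over $\cL$ is cut out inside $\Gr(r+1, d)$ by two Schubert conditions, and the key new phenomenon (absent in $g = 0$) is that the two flags can fail to be transverse: $H^0(\cL(-a^1_j P_1)) \cap H^0(\cL(-a^2_{r-j} P_2)) = H^0(\cL(-a^1_j P_1 - a^2_{r-j} P_2))$ has the expected dimension $\max(d - a^1_j - a^2_{r-j},\,0)$ \emph{except} when $a^1_j + a^2_{r-j} = d$ and $\cL \cong \cO(a^1_j P_1 + a^2_{r-j} P_2)$, in which case it is $1$-dimensional. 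The torsion hypothesis enters here: two distinct coincidence indices $j \ne j'$ would force $(a^1_j - a^1_{j'})(P_1 - P_2) \sim 0$, which is ruled out by the assumption that $P_1 - P_2$ is not $m$-torsion for any $m \leq d$. Hence at most one coincidence can occur.

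The analysis then splits into three cases: (a) $a^1_j + a^2_{r-j} < d$ for every $j$, in which case a generic $\cL$ yields a transverse Schubert intersection of dimension $\rho - 1$ and $G^r_d$ has pure dimension $\rho$; (b) there is a unique $j^\ast$ with $a^1_{j^\ast} + a^2_{r-j^\ast} = d$ and strict inequality at every other index, in which case only the single bundle $\cL_0 := \cO(a^1_{j^\ast} P_1 + a^2_{r-j^\ast} P_2)$ contributes; and (c) some index has $a^1_j + a^2_{r-j} > d$, or several distinct coincidences would be required, in which case the space is empty. One checks that \eqref{eq:ram-ineq} with $g = 1$ is satisfied precisely in cases (a) and (b). The main obstacle is case (b): the Schubert dimension inequalities at index $j^\ast$ (for $P_1$) and $r - j^\ast$ (for $P_2$), combined with $\dim V = r + 1$, force every $V$ in the fiber to contain the section $s_0$ spanning the $1$-dimensional space $H^0(\cL_0(-a^1_{j^\ast} P_1 - a^2_{r-j^\ast} P_2)) = H^0(\cO_E)$; this unexpected $1$-dimensional overlap of the two flags raises the dimension of the Schubert intersection in $\Gr(r+1, d)$ by $1$ above the transverse expectation $\rho - 1$, so the fiber over $\cL_0$ has dimension exactly $\rho$, as required.
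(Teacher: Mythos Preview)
Your overall strategy matches the paper's: both genera reduce to intersections of Schubert cycles, with the $g=1$ case stratified over $\Pic^d(C)$. The $g=0$ argument is correct (the paper phrases transversality via Kleiman's theorem rather than invoking Richardson varieties directly, but this is cosmetic).

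For $g=1$ your plan leaves two gaps. First, in case~(a) you bound only the \emph{generic} fiber. Since every component of $G^r_d$ has dimension $\geq\rho$ by construction, a component supported over one of the finitely many special bundles $\cO(aP_1+(d-a)P_2)$ would have fiber dimension $\geq\rho$ there, and nothing in your argument rules this out. The paper checks these bundles explicitly: for $0<a<d$ the single failure of transversality at the pair $(a,d-a)$ does not affect the Schubert intersection unless $a=a^1_j$ and $d-a=a^2_{r-j}$ for some $j$ (which is excluded in your case~(a)); for $a\in\{0,d\}$ the flags are transverse but the complete series at the relevant point has vanishing sequence $0,1,\dots,d-2,d$, which can shift the Schubert indexing by one when $a^i_r=d$. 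You need some version of this to close case~(a).

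Second, in case~(b) the observation that $s_0\in V$ is correct and is indeed the crux, but the assertion that a one-dimensional overlap ``raises the dimension by $1$'' is not a proof of the upper bound $\rho$, and you have not exhibited any point to establish nonemptiness. The paper's device here is different and arguably cleaner: uniqueness of the coincidence index $j^\ast$ forces $a^1_{j^\ast-1}\leq a^1_{j^\ast}-2$ (when $j^\ast>0$), so replacing $a^1_{j^\ast}$ by $a^1_{j^\ast}-1$ yields a valid vanishing sequence for which the Schubert problem over $\cL_0$ \emph{is} transverse with expected dimension $\rho$; the original locus sits inside this, giving the upper bound. Nonemptiness is then obtained by writing down $r+1$ sections of $\cL_0$ with exactly the prescribed vanishing orders at $P_1$ and $P_2$.
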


\begin{proof}
It will be convenient to set the following notation for 
vanishing sequences: $a^i_j:=\alpha^i_j+j$ for all $i,j$. We begin with 
the following observation, which follows from a dimension count: if 
$(\sL,V)$ is a 
$\fg^r_d$ with vanishing sequence (at least) $a^i_j$ at $P_i$ for $i=1,2$,
then for any $j=0,\dots,r$, then there exists $s \in V$ vanishing to order
at least $a^1_j$ at $P_1$ and $a^2_{r-j}$ at $P_2$. We then easily conclude
the asserted emptiness statement under the generality hypothesis as 
follows: we immediately see that we
must have $a^1_j+a^2_{r-j} \leq d$ for all $j$, which is precisely the
desired statement for $g=0$. On the other hand, if $a^1_j=a$ and 
$a^2_{r-j}=d-a$, we also conclude that we can only have a $\fg^r_d$ with
this vanishing if the underlying line bundle is $\sO(aP_1+(d-a)P_2)$.
This can occur for two different values of $a$ only if 
$\sO(aP_1+(d-a)P_2) \cong \sO(a'P_1+(d-a')P_2)$, or 
$\sO((a-a')P_1+(a'-a)P_2) \cong \sO$. This implies that $P_1-P_2$ is
an $|a-a'|$-torsion point in the Jacobian of $C$, so this in turn
violates our generality hypothesis for the $g=1$ case, and we conclude
the asserted emptiness.

We next consider the dimension assertion, recalling that as discussed
in Remark \ref{rem:reduce-to-special}, the dimensional lower bound is
an immediate consequence of the construction of 
$G^r_d(C,P_1,P_2,\alpha^1,\alpha^2)$. We thus direct our attention to
the upper bound, starting with the case $g=0$.
The only line bundle of degree $d$ is $\sO(d)$, so
the space of all $\fg^r_d$s is the Grassmannian $G(r+1,H^0(\PP^1,\sO(d)))$, 
where we can identify $H^0(\PP^1,\sO(d))$ with the space of polynomials of
degree $d$. Directly from the definitions, we have that 
$G^r_d(C,P_1,P_2,\alpha^1,\alpha^2)$ is described as an intersection of 
two Schubert cycles, associated to the flags of polynomials vanishing
to different orders at $P_1$ and $P_2$, respectively. By unique factorization
of polynomials, these flags are complementary in $H^0(\PP^1,\sO(d))$, so
they are general, and thus by Kleiman's theorem, if the intersection of the 
Schubert cycles is nonempty, it has the expected dimension. 

For the case $g=1$, to prove the dimension statement we only need an
upper bound, and we work over one point of $\Pic^d(C)$ at a time.
According to the Riemann-Roch theorem, there are only two possibilities for 
the vanishing sequence of a complete linear series of a line bundle $\sL$ 
at a point $P$: if $\sL \not \cong \sO(dP)$, the vanishing sequence is 
$0,1,\dots,d-1$, while if $\sL \cong \sO(dP)$, the vanishing sequence is 
$0,1,\dots,d-2,d$.
In particular, if $a^1_r=d$, then $G^r_d(C,P_1,P_2,\alpha^1,\alpha^2)$ is 
necessarily supported over the point corresponding to $\sO(dP_1)$, and
similarly for $a^2_r=d$. By the generality hypothesis, 
$\sO(dP_1) \not \equiv \sO(dP_2)$, so we conclude that in order for the
space to be nonempty, we must have at most one $a^i_r=d$. 

There are two types of line bundles to consider: first, if 
$\sL \not\cong \sO(aP_1+(d-a)P_2)$ for $0 \leq a \leq d$, we see that the
flags obtained by imposing vanishing at $P_1$ and $P_2$ are transverse,
hence general as before, so we conclude that for such an $\sL$, if the
corresponding fiber of $G^r_d(C,P_1,P_2,\alpha^1,\alpha^2)$ is nonempty,
it has dimension 
$$(r+1)(d-1-r)-\sum_{i=1}^2 \sum_{j=0^r}\alpha^i_j=\rho-1.$$ 
Since the space of such line bundles $\sL$ has dimension $1$, we conclude
that $G^r_d(C,P_1,P_2,\alpha^1,\alpha^2)$ can have dimension at most $\rho$
over the open locus of line bundles of this type. 

Next, suppose that
$\sL \cong \sO(aP_1+(d-a)P_2)$ for some $a$. Observe that by our generality
hypothesis, $a$ is always uniquely determined. We first consider the case
that $a=0$ or $d$. Here, we find that the flags
obtained in $H^0(C,\sL)$ from vanishing at $P_1$ or $P_2$ are still
transverse, and thus the Schubert cycles intersect transversely, but the 
Schubert cycle indexing can be off by one from the vanishing sequence if 
$a=0$ and $a^2_r=d$, or if $a=d$ and $a^1_r=d$. In either case, we have
that the dimension of the corresponding fiber is at most
$$(r+1)(d-1-r)-\sum_{i=1}^2 \sum_{j=0^r}\alpha^i_j+1=\rho.$$ 
Finally, if $0<a<d$, we find that the flag indexing corresponds to the
Schubert cycle indexing, but the flags fail to be transverse at the
spaces corresponding to vanishing of order $a$ at $P_1$ and $d-a$ at $P_2$.
This only affects the dimension of intersection if for some $j$, we have 
$a^1_j=a$ and $a^2_{r-j}=d-a$.
Since we have already proved the emptiness assertion if $a^1_j+a^2_{r-j}=d$
for more than one index $j$, we may assume that there is no $j' \neq j$
with $a^1_j+a^2_{r-j}=d$, and in particular if $j>0$, we must have
$a^1_{j-1}\leq a-2$. Thus, regardless of whether or not $j>0$, we can 
still obtain a valid vanishing sequence
if we replace $a^1_j$ by $a-1$. In this case, we have already seen that
the Schubert cycles still intersect in the expected dimension, which is
now 
$$(r+1)(d-1-r)-\sum_{i=1}^2 \sum_{j=0^r}\alpha^i_j+1=\rho$$ 
because we changed $a^1_j$. This gives us the desired statement.

It remains to check the asserted nonemptiness statement, which does not
require any generality hypothesis. In the case
$g=0$, if $a^1_j+a^2_{r-j} \leq d$ for all $r$, we can simply take
$r$ sections of $\sO(d)$, with the $j$th section vanishing to order
exactly $a^1_j$ at $P_1$ and $a^2_{r-j}$ at $P_2$, and we obtain the
desired $\fg^r_d$. Similarly, if $g=1$ and we have $a_1,a_2$ with
$a_1+a_2 \leq d-1$, then for any line bundle $\sL$ on $C$ we have a
section vanishing to order exactly $a_1$ at $P_1$ and $a_2$ at $P_2$ as
long as we do not have $a_1+a_2=d-1$ and $\sL\cong (a_1P_1+(d-a_1)P_2)$
or $\sL \cong \sO((d-a_2)P_1+a_2 P_2)$.
Thus, if $a^1_j+a^2_{r-j} \leq d-1$ for
all $j$, we can choose any $\sL \not \cong \sO(aP_1+(d-a)P_2)$ for 
$a=0,\dots,d$, and then construct the desired $\fg^r_d$ via a basis
as above. Finally, if $a^1_j+a^2_{r-j}=d$ for some $j$,
and $a^1_{j'}+a^2_{r-j} \leq d-1$ for $j' \neq j$, set 
$\sL=\sO(a^1_j P_1+a^2_{r-j}P_2)$, and since we cannot have
$a^1_{j'}=a^1_j-1$ or $a^2_{j'}=a^2_{r-j}-1$ for any $j'$, we are
still able to find a basis of sections with precisely the desired
vanishing. This completes the proof of the lemma.
\end{proof}

\section{The degeneration}

We begin by reviewing the machinery of limit linear series.
For our argument, it is enough to consider degenerations to curves with
two components. Specifically, we will consider the following situation:

\begin{sit} Let $X_0$ be a proper nodal curve over $k$ obtained by gluing
two smooth curves $Y$ and $Z$ to one another at a single node $Q$.
\end{sit}

We recall the Eisenbud-Harris definition of a limit linear series from
\cite{e-h1}:

\begin{defn}\label{def:eh} An {\bf Eisenbud-Harris limit} $\fg^r_d$ on 
$X_0$ is a pair 
$((\sL_Y,V_Y),(\sL_Z,V_Z))$ of $\fg^r_d$'s on $Y$ and $Z$ respectively,
such that for $j=0,\dots,r$ we have
\begin{equation}\label{eq:eh-ineq} \alpha^Y_j+\alpha^Z_{r-j} \geq d-r,
\end{equation}
where $\alpha^Y_0,\dots,\alpha^Y_r$ and $\alpha^Z_0,\dots,\alpha^Z_r$ are 
the ramification sequences at $Q$ of $(\sL_Y,V_Y)$ and $(\sL_Z,V_Z)$, 
respectively.
\end{defn}

\begin{notn}
The space of Eisenbud-Harris limit $\fg^r_d$'s on $X_0$ is denoted by 
$G^{r,\EH}_d(X_0)$.
\end{notn}

\begin{rem} In fact, the Eisenbud-Harris terminology differs slightly in
that our ``limit series'' are their ``crude limit series.'' 
\end{rem}

We also note that we can allow for imposed ramification of limit 
series as follows: given a smooth point $P$ of $X_0$, we say an
Eisenbud-Harris limit $\fg^r_d$ given by the pair $((\sL_Y,V_Y),(\sL_Z,V_Z))$
has ramification at least $\alpha$ at $P$ if $(\sL_Y,V_Y)$ or $(\sL_Z,V_Z)$
does, depending on whether $P$ lies on $Y$ or $Z$. 

\begin{notn}\label{notn:strata} Given nondecreasing integer sequences 
$\alpha^Y,\alpha^Z$ of length $r+1$ in $[0,d-r]$ satisfying
\begin{equation}\label{eq:eh-ineq-2} \alpha^Y_j+\alpha^Z_{r-j} \geq d-r
\end{equation}
for $j=0,\dots,r$, 
let $G^{r,\EH}_d(X_0;\alpha^Y,\alpha^Z)$ be the 
space of Eisenbud-Harris limit $\fg^r_d$'s $((\sL_Y,V_Y),(\sL_Z,V_Z))$ on 
$X_0$ with $(\sL_Y,V_Y)$ having ramification sequence $\alpha^Y$ at $Q$, and
$(\sL_Z,V_Z)$ having ramification sequence $\alpha^Z$ at $Q$.
\end{notn}

The power of the
Eisenbud-Harris theory derives from its inductive structure. Specifically,
we have the following basic observations, which are immediate consequences
of the definition.

\begin{prop}\label{prop:eh-describe} 
The spaces $G^{r,\EH}_d(X_0;\alpha^Y,\alpha^Z)$ give a stratification of
$G^{r,\EH}_d(X_0)$ by (locally closed) subschemes. Moreover, each space 
$G^{r,\EH}_d(X_0;\alpha^Y,\alpha^Z)$ is isomorphic to an open subscheme of
$G^r_d(Y,Q,\alpha^Y) \times G^r_d(Z,Q,\alpha^Z)$.

Alternatively, $G^{r,\EH}_d(X_0)$ is the union over all sequences
$\alpha^Y,\alpha^Z$ achieving equality in \eqref{eq:eh-ineq-2} for all $j$ of
the spaces 
$$G^r_d(Y,Q,\alpha^Y) \times G^r_d(Z,Q,\alpha^Z).$$

The same descriptions hold if ramification is imposed at smooth points of
$X_0$.
\end{prop}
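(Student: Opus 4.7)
The plan is to unpack Definition \ref{def:eh} directly; each assertion is a reorganization of the definition, and I expect the only work to be some light bookkeeping.

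For the first paragraph, I observe that by Notation \ref{notn:strata}, $G^{r,\EH}_d(X_0;\alpha^Y,\alpha^Z)$ is the locus with \emph{exact} ramification $(\alpha^Y,\alpha^Z)$ at $Q$. Since ``ramification at $Q$ is at least $\gamma$'' cuts out a closed subscheme of $G^r_d(Y,Q,\alpha^Y)$ for each $\gamma \geq \alpha^Y$, the locus of exact ramification $\alpha^Y$ is open in $G^r_d(Y,Q,\alpha^Y)$; likewise on $Z$. The product is an open subscheme of $G^r_d(Y,Q,\alpha^Y) \times G^r_d(Z,Q,\alpha^Z)$, and since $(\alpha^Y,\alpha^Z)$ satisfies \eqref{eq:eh-ineq-2} by hypothesis, every point of this open subscheme automatically satisfies \eqref{eq:eh-ineq}. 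This identifies it with $G^{r,\EH}_d(X_0;\alpha^Y,\alpha^Z)$, and since every EH limit series has a unique pair of ramification sequences at $Q$, the strata partition $G^{r,\EH}_d(X_0)$.

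For the second paragraph, given an EH limit series with ramification $(\gamma^Y,\gamma^Z)$ satisfying \eqref{eq:eh-ineq}, I would set $\tilde\alpha^Z := \gamma^Z$ and $\tilde\alpha^Y_j := d-r-\gamma^Z_{r-j}$. Then \eqref{eq:eh-ineq} gives $\tilde\alpha^Y_j \leq \gamma^Y_j$, and $(\tilde\alpha^Y,\tilde\alpha^Z)$ achieves equality in \eqref{eq:eh-ineq-2}; monotonicity of $\tilde\alpha^Y$ and its $[0,d-r]$ bounds follow at once from the corresponding properties of $\gamma^Z$. Hence the series lies in $G^r_d(Y,Q,\tilde\alpha^Y) \times G^r_d(Z,Q,\tilde\alpha^Z)$. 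Conversely, any pair in a product indexed by an equality-achieving $(\alpha^Y,\alpha^Z)$ satisfies \eqref{eq:eh-ineq} automatically and thus defines an EH limit series, giving the union description.

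For the last sentence, the extension of Definition \ref{def:eh} given in the paragraph preceding Notation \ref{notn:strata} imposes ramification at a smooth point $P$ of $X_0$ as a condition on whichever of $(\sL_Y,V_Y)$ or $(\sL_Z,V_Z)$ contains $P$, so it pulls back from the corresponding factor in both of the descriptions above and the arguments carry over verbatim. I see no real obstacle; the only step that needs a moment's thought is checking that $\tilde\alpha^Y$ in the second paragraph is a valid ramification sequence, which is immediate.
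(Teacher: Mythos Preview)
Your proposal is correct and matches the paper's approach: the paper offers no proof at all, stating only that these are ``immediate consequences of the definition,'' and your argument is precisely the routine unpacking of Definition~\ref{def:eh} and Notation~\ref{notn:strata} that this phrase stands in for. The one step requiring any thought---constructing an equality-achieving pair $(\tilde\alpha^Y,\tilde\alpha^Z)$ dominated by a given $(\gamma^Y,\gamma^Z)$---you handle correctly.
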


In \cite{os8}, a different definition of limit linear series is given. We
will not need the definition itself, but will need notation for the
resulting space:

\begin{notn} The space of limit $\fg^r_d$'s on $X_0$ as defined in \cite{os8}
is denoted by $G^r_d(X_0)$.
\end{notn}

The two spaces of limit linear series are different, but we have the 
following result comparing them:

\begin{thm}\label{thm:grd-compare} There is a map
\begin{equation}\label{eq:grd-compare}
G^r_d(X_0) \to G^{r,\EH}_d(X_0)
\end{equation}
which is surjective, 
and has fiber dimension bounded as follows:
given an Eisenbud-Harris limit $\fg^r_d$ on $X_0$ as in Definition
\ref{def:eh}, the corresponding fiber of \eqref{eq:grd-compare} has
dimension at most
$$\sum_{j=0}^r \alpha^Y_j+\alpha^Z_{r-j} - d+r,$$
\end{thm}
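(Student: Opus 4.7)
The plan is to work directly from the definition of $G^r_d(X_0)$ in \cite{os8}, in which a limit $\fg^r_d$ on $X_0$ consists of a line bundle on $X_0$ of a fixed admissible multidegree together with a compatible collection of $(r+1)$-dimensional subspaces of sections across the various twists of this line bundle by the node $Q$. The map \eqref{eq:grd-compare} is obtained by passing to the two aspects $(\sL_Y, V_Y)$ and $(\sL_Z, V_Z)$, and the task is thus to prove surjectivity and to bound the fiber dimension.

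For surjectivity, given an EH limit $((\sL_Y, V_Y), (\sL_Z, V_Z))$ satisfying \eqref{eq:eh-ineq}, I would construct a preimage explicitly. Choose bases $s^Y_0, \dots, s^Y_r$ of $V_Y$ with $s^Y_j$ vanishing to order $a^Y_j = \alpha^Y_j + j$ at $Q$, and analogously $s^Z_0, \dots, s^Z_r$ of $V_Z$ with $s^Z_{r-j}$ vanishing to order $a^Z_{r-j}$. The inequality $a^Y_j + a^Z_{r-j} \geq d$ is precisely what allows the pair $(s^Y_j, s^Z_{r-j})$ to be realized as the aspects of a single section of an appropriate twist of a line bundle on $X_0$. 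Assembling these for $j = 0, \dots, r$ yields a preimage in $G^r_d(X_0)$.

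For the fiber-dimension bound, fix an EH limit series and analyze the set of preimages. The remaining freedom is encoded in the gluing data used to pair $s^Y_j$ with $s^Z_{r-j}$ across the node: when $a^Y_j + a^Z_{r-j} = d$ the matching is forced up to overall scaling, while each unit of excess $a^Y_j + a^Z_{r-j} - d = \alpha^Y_j + \alpha^Z_{r-j} + r - d$ contributes one additional parameter of choice. Summing over $j = 0, \dots, r$ yields the desired upper bound $\sum_{j=0}^r \alpha^Y_j + \alpha^Z_{r-j} - d + r$.

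The main obstacle I anticipate is verifying that this gluing construction really produces an object satisfying all of the compatibility conditions in the definition of \cite{os8}, rather than merely an EH-type pair of aspects. The fiber-dimension bound, by contrast, is a one-sided inequality and therefore follows once the gluing parameters are identified, even without determining exactly which choices yield valid limit series. Both steps reduce to a local linear-algebra calculation at the node $Q$, but the surjectivity step requires careful unpacking of the definition in \cite{os8}.
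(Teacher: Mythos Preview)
The paper does not actually prove this theorem: immediately after the statement it writes ``This result combines Proposition 6.6 of \cite{os8} with Corollary 5.5 of \cite{os13}; see also Theorem 2.3 of \cite{li2} for a simplified proof of the latter.'' So the paper's ``proof'' is a citation, with surjectivity coming from \cite{os8} and the fiber-dimension bound from \cite{os13}. Your proposal, by contrast, attempts a self-contained argument.

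Your surjectivity sketch is essentially the right idea and matches the construction in \cite{os8}: building a preimage by choosing bases adapted to the vanishing at $Q$ and using the inequality $a^Y_j + a^Z_{r-j} \geq d$ to glue. The obstacle you flag --- checking that the glued object satisfies all the compatibility conditions across the intermediate twists --- is real but routine once the definition is unpacked, and your plan is sound there.

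Your fiber-dimension argument, however, is where the real gap lies. In the definition of \cite{os8}, a point of $G^r_d(X_0)$ is not just a pair of aspects with gluing data at $Q$: it is a choice of an $(r+1)$-dimensional subspace $V_i$ for \emph{each} of the $d+1$ twists of the line bundle, subject to the condition that the natural twist maps carry $V_i$ into $V_{i\pm 1}$. The fiber over a fixed Eisenbud--Harris limit series is thus the space of all compatible fillings-in of the intermediate $V_i$'s, not merely a space of scalar gluing parameters attached to basis vectors. Your heuristic ``each unit of excess contributes one parameter'' gives the right number, but it does not constitute a proof: one must bound the dimension of an iterated fiber product of Grassmannian-type loci, and the argument in \cite{os13} (or \cite{li2}) does this via an inductive linear-algebra count rather than by directly identifying coordinates. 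Without that, your fiber bound is an assertion rather than an argument.
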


This result combines Proposition 6.6 
of \cite{os8}
with Corollary 5.5 of \cite{os13}; see also Theorem 2.3 of \cite{li2}
for a simplified proof of the latter.

We can then define 
ramification conditions on $G^r_d(X_0)$ by taking the preimage under
\eqref{eq:grd-compare} of the corresponding locus of $G^{r,\EH}_d(X_0)$.

\begin{rem} Technically, the map \eqref{eq:grd-compare} is not known to
exist in general on a scheme-theoretic level. The precise statement is
that $G^{r,\EH}_d(X_0)$ is a closed subscheme of $G^r_d(Y)\times G^r_d(Z)$,
and \eqref{eq:grd-compare} is in fact a morphism 
$G^r_d(X_0) \to G^r_d(Y)\times G^r_d(Z)$ which factors set-theoretically
through $G^{r,\EH}_d(X_0)$. However, since we are only interested in
questions of (non)emptiness and dimension, this technical distinction is
irrelevant to us.
\end{rem}

We next recall the behavior of limit series in smoothing families.

\begin{sit}\label{sit:smoothing-family} Suppose $X/B$ is a flat, proper 
morphism, with $B$ the
spectrum of a DVR, and $X$ a regular scheme having special fiber $X_0$
as above, and generic fiber $X_{\eta}$ a smooth proper curve.
\end{sit}

\begin{thm}\label{thm:grd-families} There exists a scheme $G^r_d(X/B)$,
proper over $B$, with special fiber $G^r_d(X_0)$, generic fiber 
$G^r_d(X_{\eta})$, and such that every component of $G^r_d(X/B)$ has 
dimension at least $\rho+\dim B=\rho+1$. The same holds with imposed
ramification along smooth sections of $X/B$.
\end{thm}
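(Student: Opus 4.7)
The plan is to appeal to the relative moduli construction of \cite{os8}, which exhibits $G^r_d(X/B)$ as a closed subscheme of a proper $B$-scheme built from a relative Picard scheme and a suitable relative Grassmannian bundle, cut out by the closed conditions encoding the vanishing and node-linking data defining a limit linear series in families. Properness is then automatic from closedness inside a proper ambient, and the fiber identifications are built into the construction: on the generic fiber the linking conditions reduce to the classical vanishing conditions for linear series on the smooth curve $X_\eta$, recovering $G^r_d(X_\eta)$; on the special fiber one recovers $G^r_d(X_0)$ by definition.

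For the dimension lower bound, the strategy is to realize $G^r_d(X/B)$ locally as a determinantal or Schubert-cycle intersection inside a smooth ambient $B$-scheme whose relative dimension is easy to compute. Concretely, after restricting to a suitable open of $B$ and twisting by a relative effective divisor of sufficiently large degree $N$ so that higher cohomology vanishes and $h^0$ is constant in families, the ambient is a Grassmannian bundle of $(r+1)$-dimensional subspaces of a locally free sheaf of known rank on $\Pic^d(X/B)$, of relative dimension $g+(r+1)(d+N-r)$ over $B$. The node-linking conditions together with any Schubert conditions imposed by ramification sequences $\alpha^i$ at smooth sections $P_i$ are cut out by equations whose total count equals $g+(r+1)N+\sum_{i,j}\alpha^i_j$, so by the standard codimension estimate for such loci every component has dimension at least $\rho+\dim B=\rho+1$.

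The main obstacle, and the reason for invoking \cite{os8} rather than attempting to build a relative version of $G^{r,\EH}_d$ directly, is that the Eisenbud-Harris space does not extend in a controlled way across the boundary in smoothing families; this is precisely the technical issue alluded to in Remark \ref{rem:technicality}. Once the relative moduli space of \cite{os8} is in hand, the remaining content — properness, fiber identification, and the expected dimension bound with or without imposed ramification along smooth sections — is a formal consequence of the standard theory of relative Grassmannians and the codimension estimate for intersections of Schubert loci in a smooth ambient scheme.
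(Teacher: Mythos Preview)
The paper does not give an independent proof here; it simply records this as Theorem 5.3 of \cite{os8}. Your proposal is aligned with that: you correctly identify that the content is the relative moduli construction of \cite{os8}, and your outline (closed subscheme of a relative Grassmannian bundle over a relative Picard, properness from closedness in a proper ambient, fiberwise identification, determinantal lower bound) matches the shape of the argument there.

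One caution on the numerics: as written, your count does not yield $\rho$. Subtracting your stated number of conditions from your stated ambient relative dimension gives $(r+1)(d-r)-\sum_{i,j}\alpha^i_j=\rho+rg$, not $\rho$. On a smooth genus-$g$ fiber the pushforward sheaf has rank $d+N+1-g$, not $d+N+1$, and on the nodal fiber the linked-Grassmannian construction of \cite{os8} involves more than a single Grassmannian bundle, so the bookkeeping is genuinely more delicate than your sketch indicates. This does not affect the strategy, but the precise codimension estimate should be quoted from \cite{os8} rather than recomputed informally.
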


This is Theorem 5.3 of \cite{os8}.

Using the dimension and properness assertions of the theorem, we conclude:

\begin{cor}\label{cor:grd-families} If $G^r_d(X_0)$ is empty, so is
$G^r_d(X_{\eta})$. If $G^r_d(X_0)$ is nonempty, every component has dimension
at least $\rho$; if it has pure dimension $\rho$,
then $G^r_d(X_{\eta})$ is also nonempty of pure dimension $\rho$. The same 
holds with imposed ramification along smooth sections of $X/B$.
\end{cor}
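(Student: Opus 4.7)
The plan is to deduce the corollary formally from Theorem \ref{thm:grd-families} using standard properties of proper morphisms to the spectrum of a DVR. The key ingredients are that $G^r_d(X/B) \to B$ is proper (so its set-theoretic image is closed in $B$), that $B$ has only three closed subsets (empty, the closed point, and all of $B$) so every nonempty closed subset contains the closed point, and that an integral scheme dominating $\Spec$ of a DVR is automatically flat.

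First I would handle the emptiness assertion: if $G^r_d(X_0)$ is empty, the image of the proper map $G^r_d(X/B) \to B$ misses the closed point; being closed, the image is then empty, so $G^r_d(X_\eta)$ is empty as well. Next I would analyze the irreducible components of $G^r_d(X/B)$ case by case, using that each has dimension at least $\rho+1$. Such a component $C$ either maps constantly to the closed point, in which case $C \subseteq G^r_d(X_0)$ has dimension at least $\rho+1 > \rho$, or it dominates $B$, in which case flatness forces every component of its special fiber to have dimension exactly $\dim C - 1 \geq \rho$ and its generic fiber to have the same dimension. Since every component of $G^r_d(X_0)$ sits inside the special fiber of some component of $G^r_d(X/B)$, the dimensional lower bound by $\rho$ follows.

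For the final assertion, suppose $G^r_d(X_0)$ is pure of dimension $\rho$. Then no component of $G^r_d(X/B)$ can map constantly to the closed point, for otherwise it would contribute a component of $G^r_d(X_0)$ of dimension at least $\rho+1$, violating purity. Hence every component $C$ of $G^r_d(X/B)$ dominates $B$ and is flat; combined with $\dim C \geq \rho+1$ and $\dim C_s \leq \rho$, this forces $\dim C = \rho+1$ and $\dim C_\eta = \rho$. Thus the generic fibers cover $G^r_d(X_\eta)$ in pure dimension $\rho$, and nonemptiness is inherited from the (nonempty) special fiber via the surjective map $G^r_d(X/B) \to B$.

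There is no real obstacle; the corollary is essentially a bookkeeping consequence of Theorem \ref{thm:grd-families}. The only subtlety worth flagging is that every component of the special fiber $G^r_d(X_0)$ really does arise as a component of the special fiber of some component of $G^r_d(X/B)$, which holds set-theoretically because $G^r_d(X_0)$ is the set-theoretic fiber over the closed point of $B$.
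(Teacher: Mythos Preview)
Your proposal is correct and matches the paper's approach: the paper does not spell out a proof at all, simply stating that the corollary follows from the properness and dimension assertions of Theorem~\ref{thm:grd-families}, and your argument supplies exactly the standard details behind that deduction. The only point to note is that your write-up is considerably more explicit than the paper's one-line justification, but the content is the same.
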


We can now easily prove Theorem \ref{thm:bn-plus-2}. Rather than explicitly
working with a degeneration to a chain of elliptic curves, it is more
convenient to work by induction on genus, but in either case the spirit
is the same.

\begin{proof}[Proof of Theorem \ref{thm:bn-plus-2}] The base cases
are $g=0,1$, and treated in Lemma \ref{lem:base-cases}. Given $g \geq 2$,
suppose Theorem \ref{thm:bn-plus-2} holds in all genera strictly less than
$g$. Given $r,d,\alpha^1,\alpha^2$, let $Y$ be a smooth genus-$1$ curve with 
points $P_1$, $Q$
not differing by $m$-torsion for $m \leq d$, and let $Z$ be a general
$2$-marked smooth curve of genus $g-1$, with marked points $Q$ and $P_2$.
Let $X_0$ be the nodal curve obtained by gluing $Y$ to $Z$ at $Q$. Given
this choice of $X_0$, let $X/B$ be as in Situation \ref{sit:smoothing-family},
and let $P_1,P_2$ be sections of $X/B$ extending the chosen points of $X_0$.
That such a family always exists is well known; see for instance
Theorem 3.4 of \cite{os8}.
\begin{figure}
\centering
\input{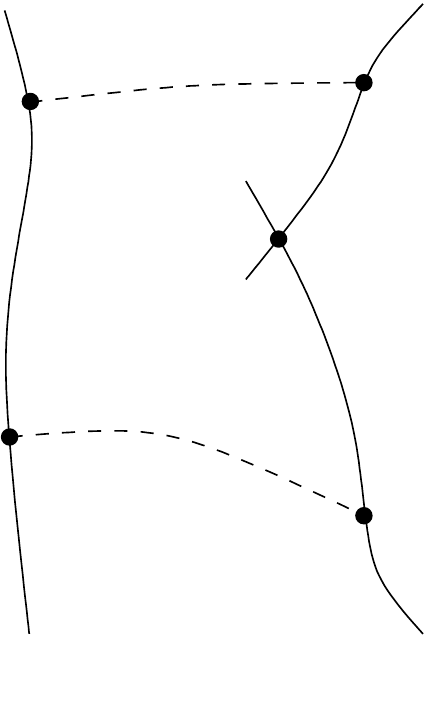_t}
\caption{$Y$ is smooth of genus $1$, and $Z$ is smooth of genus $g-1$.}
\end{figure}

Given sequences $\alpha^Y,\alpha^Z$ as in Notation \ref{notn:strata},
by the induction hypothesis together with Proposition 
\ref{prop:eh-describe}, we conclude that if 
$G^{r,\EH}_{d,a^Y,a^Z}(X_0;(P_1,\alpha^1),(P_2,\alpha^2))$
is nonempty, it has pure dimension 
$$\rho_{Y;\alpha^Y}+\rho_{Z;\alpha^Z}=
\rho-\sum_{j=0}^r (\alpha^Y_j+\alpha^Z_{r-j}-d+r),$$
where 
$$\rho_{Y;\alpha^Y}=
(r+1)(d-r)-r-\sum_{j=0}^r \alpha^1_j-\sum_{j=0}^r \alpha^Y_j$$
and
$$\rho_{Z;\alpha^Z}
=(r+1)(d-r)-(g-1)r-\sum_{j=0}^r \alpha^2_j-\sum_{j=0}^r \alpha^Z_j$$
are the expected dimensions of $\fg^r_d$'s on $Y$ and $Z$ with the
required ramification at $P_1$, $P_2$ and $Q$. Working stratum by stratum,
we thus conclude from Theorem \ref{thm:grd-compare} that 
$G^{r}_d(X_0;(P_1,\alpha^1),(P_2,\alpha^2))$
can have dimension at most $\rho$, and is nonempty if and only if
$G^{r,\EH}_d(X_0;(P_1,\alpha^1),(P_2,\alpha^2))$ is nonempty.
Since we know that 
every component of $G^{r}_d(X_0;(P_1,\alpha^1),(P_2,\alpha^2))$ always
has dimension at least $\rho$, we conclude that
$G^{r}_d(X_0;(P_1,\alpha^1),(P_2,\alpha^2))$ has pure dimension $\rho$
whenever it is nonempty.
We thus conclude from Corollary \ref{cor:grd-families} that if
$G^r_d(X_{\eta};(P_1,\alpha^1),(P_2,\alpha^2))$ is nonempty, it must
have pure dimension $\rho$, and that nonemptiness of
$G^r_d(X_{\eta};(P_1,\alpha^1),(P_2,\alpha^2))$ is equivalent to
nonemptiness of $G^{r,\EH}_d(X_0;(P_1,\alpha^1),(P_2,\alpha^2))$.

According to Remark \ref{rem:reduce-to-special}, we have thus proved the 
dimension portion of the Brill-Noether theorem, and in particular the
emptiness assertion in the case $\rho<0$. It remains to analyze the 
sharp nonemptiness statement. Now, nonemptiness of 
$G^{r,\EH}_d(X_0;(P_1,\alpha^1),(P_2,\alpha^2))$ is equivalent to the
existence of sequences $a^Y,a^Z$ achieving equality in \eqref{eq:eh-ineq-2}
such that the spaces $G^r_d(Y;(P_1,\alpha^1),(Q,\alpha^Y))$ and 
$G^r_d(Z,(Q,\alpha^Z),(P_2,\alpha^2))$ are 
both nonempty. By the induction hypothesis, the latter nonemptiness 
conditions are determined precisely by the inequality \eqref{eq:ram-ineq}.
Consider all possibilities for $\alpha^Y$ such that \eqref{eq:ram-ineq}
is satisfied on $Y$. Then we have $\alpha^1_j+\alpha^Y_{r-j} \leq d-r$ for
all $j$, with equality holding for at most a single value of $j$. Since
$\alpha^Z_j=d-r-\alpha^Y_{r-j}$, we then have
$\alpha^Z_j \geq \alpha^1_j$ for all $j$, with equality holding for at
most a single value of $j$. It follows that as long as \eqref{eq:ram-ineq}
holds on $Y$, then we have
\begin{equation}\label{eq:ram-ineq-2}
\!\!\!\!\!\!\!\!\!\! \sum
_{\,\,\,\,\,\,\,\,\,\,\,\,j:\alpha^Z_j+\alpha^2_{r-j}+r \geq d+1-(g-1)} 
\!\!\!\!\!\!\!\!\!\!\!\!\!\!\! \alpha^Z_j+\alpha^2_{r-j}+r-d+g-1
\geq \left(
\!\!\!\!\!\!\!\!\!\! \sum
_{\,\,\,\,\,\,\,\,\,\,\,\,j:\alpha^1_j+\alpha^2_{r-j}+r \geq d+1-g} 
\!\!\!\!\!\!\!\!\!\!\!\!\!\!\!\alpha^1_j+\alpha^2_j+r-d+g\right)-1.
\end{equation}
It immediately follows that if \eqref{eq:ram-ineq} is violated for 
$\alpha^1,\alpha^2$, then it is impossible to find $\alpha^Y,\alpha^Z$
satisfying \eqref{eq:ram-ineq} on both $Y$ and $Z$. We thus conclude
the desired emptiness statement for 
$G^{r,\EH}_d(X_0;(P_1,\alpha^1),(P_2,\alpha^2))$, and hence for
$G^r_d(X_{\eta};(P_1,\alpha^1),(P_2,\alpha^2))$.

Conversely, suppose first that \eqref{eq:ram-ineq} is satisfied with
equality. It is enough to see that we can choose $\alpha^Y,\alpha^Z$
so that \eqref{eq:ram-ineq-2} is satisfied with equality. Since $g>0$,
we must have at least some indices $j$ such that 
$\alpha^1_j+\alpha^2_{r-j}+r \geq d+1-g$; let $j_0$ be the minimal 
such index. Then we set $\alpha^Y_j=d-r-1-\alpha^1_{r-j}$ for all $j \neq j_0$,
and $\alpha^Y_{j_0}=d-r-\alpha^1_{r-j}$. Note that this gives a valid 
nondecreasing sequence because of the minimal of $j_0$.
If we then set $\alpha^Z_j=d-r-\alpha^Y_{r-j}$ for all $j$, we will
achieve equality in \eqref{eq:ram-ineq-2}, as desired. Lastly, suppose
that \eqref{eq:ram-ineq} is satisfied with strict inequality. Then we
can set  
$\alpha^Y_j=d-r-1-\alpha^1_{r-j}$ and $\alpha^Z_j=d-r-\alpha^Y_{r-j}$ for 
all $j$, and we will still have \eqref{eq:ram-ineq} satisfied on $Z$.
We thus conclude that if \eqref{eq:ram-ineq} is satisfied,
$G^{r,\EH}_d(X_0;(P_1,\alpha^1),(P_2,\alpha^2))$ -- and hence 
$G^r_d(X_{\eta};(P_1,\alpha^1),(P_2,\alpha^2))$ -- is nonempty.
Again according to Remark \ref{rem:reduce-to-special}, we conclude the
sharp nonemptiness assertion of Theorem \ref{thm:bn-plus-2}, completing
the proof of the theorem.
\end{proof}

\begin{rem}\label{rem:technicality} 
The importance of the limit linear series construction of
\cite{os8} arises in proving the dimension upper bound in the case that 
$\rho \geq 0$. In \cite{e-h1}, Eisenbud and Harris do not construct a 
proper family of limit linear
series, instead constructing a space whose special fiber consists of
``refined limit series'' -- those which satisfy \eqref{eq:eh-ineq} with
equality. In characteristic $0$, this does not cause serious problems,
as they show that a limit series arising from a linear series in a 
smoothing family is refined if and only if there is no ramification
specializing to the node. They can then start with any linear series on
the generic fiber and obtain a refined limit series on the special
fiber after base change and blowup.  However, this argument fails in
positive characteristic, again due to the presence of inseparability.
Thus, when working in positive characteristic, there is no obvious way
to reduce to the refined case when comparing dimensions on the special
and generic fibers.
\end{rem}

\bibliographystyle{hamsplain}
\bibliography{hgen}

\end{document}